\newtheorem{theorem}{Theorem}[section]
\newtheorem{remark}[theorem]{Remark}
\newcommand{\qed}{\hfill $\square$\medskip}
\begin{document}	
\title{A New Algorithm for Computing the Frobenius Number}
	
	\author{Abbas Taheri$^1$ \and	Saeid Alikhani$^{2,}$\footnote{Corresponding author}
			}
	
	\date{\today}
	
	\maketitle
	
	\begin{center}
		$^1$Department of Electrical Engineering, Yazd University, 89195-741, Yazd, Iran \\
		$^2$Department of Mathematical Sciences, Yazd University, 89195-741, Yazd, Iran\\
	{\tt a.taheri@stu.yazd.ac.ir ~~~alikhani@yazd.ac.ir}
	\end{center}
	
\noindent{\bf Keywords:} Algorithm, Frobenius, Number.  

\medskip
\noindent{\bf AMS Subj.\ Class.} 01B39, 11D04:

\begin{abstract}
A number $\alpha$ has a representation with respect
to the numbers $\alpha_1,...,\alpha_n$, if there exist the non-negative
integers $\lambda_1,... ,\lambda_n$ such  that
$\alpha=\lambda_1\alpha_1+...+\lambda_n \alpha_n$.
The largest natural number that does not have a representation
with respect to  the numbers $\alpha_1,...,\alpha_n$ is called the
Frobenius number and is denoted by the symbol
$g(\alpha_1,...,\alpha_n)$.
In this paper, we present a new algorithm to calculate the
Frobenius number. Also we present the sequential form of the new algorithm. 
\end{abstract}

%==================================================
% The number of pages must be 3 or 4
\section{Introduction}
Let $\alpha_1,...,\alpha_n$ $(n\geq 2)$ be positive integers
with $\gcd (\alpha_1,...,\alpha_n)=1$. Finding the largest
positive integer $N$ such that the Diophantine equation
$\alpha_1x_1 + \alpha_2x_2 +...+\alpha_nx_n=N$ has no solution
in non-negative integers is known as the Frobenius problem. Such
the largest positive integer $N$ is called the Frobenius number
of $\alpha_1,...,\alpha_n$. Various results of the Frobenius
number have been studied extensively.

The Frobenius problem is well known as the coin problem that
asks for the largest monetary amount that cannot be obtained
using only coins in the set of coin denominations which has no
common divisor greater than $1$. This problem is also referred
to as the McNugget number problem introduced by Henri Picciotto.
The origin of this problem for $n=2$ was proposed by Sylvester
(1884), and this was solved by Curran Sharp (1884), see
\cite{Sharp,Syl}.
Curran Sharp  \cite{Sharp} in 1884  proved that
$ g(\alpha_1,\alpha_2)=\alpha_1\alpha_2-\alpha_1-\alpha_2$.

Let $\alpha_1,...,\alpha_n$ be positive integers whose greatest
divisor is equal to one, in other words
\[
\gcd(\alpha_1,...,\alpha_n)=1.
\]
If $S=<\alpha_1,...,\alpha_n>$ 
is the semigroup generated by $\alpha_1,...,\alpha_n$, then
finding $g(S)$ is a problem and therefore finding the bounds for
$g( S)$, whenever we have a certain sequence of numbers,
\cite{Eur} is of interest. For example, if $S$ is an arithmetic
sequence with relative value $d$, then we have \cite{Robert}:
\[
g(a,a+d,a+2d,...,a+kd)=a\lfloor\frac{a-2}{k} \rfloor+d(a-1).
\]

Fibonacci sequence is a recursive sequence
$F_n=F_{n-1}+F_{n-2}$, $n\geq 3$
with
$F_1=F_2=1$. 
For every integer $l\geq i+2$, we have
$g(F_i,F_{i+1},F_l)=g(F_i,F_{i+ 1})$.
Assuming $gcd(F_i,F_j,F_l)=1$ for the triplet $3\leq i<j<l$,
calculating
 $g(F_i ,F_j,F_l)$ has been considered.
	Suppose that $i,k\geq 3$ are integers and
	$r=\lfloor\frac{F_i-1}{F_i}\rfloor$.
	In this case (see \cite{Amita})
	\[
	g(F_i,F_{i+2},F_{i+k})=\left\{
	\begin{array}{lr}
	{\displaystyle (F_i-1)F_{i+2}-F_i(rF_{k-2}+1)};&
\quad\mbox{If $r=0$ or $r\geq 1$ and} \\ {} &
\quad\mbox{$F_{k-2}F_i<(F_i -rF_k)F_{i+2},$}\\[15pt]
	{\displaystyle r(F_{k}-1)F_{i+2}-F_i((r-1)F_{k-2}+1)}&
	\quad\mbox{otherwise}
	\end{array}
	\right.
	\]

It was shown by Curtis \cite{5} that no closed formula exists for the
Frobenius number if $n>2$. Because of this reason,  there has been a great deal of research into producing upper
bounds on $g(a_1,a_2,...,a_n)$. These bounds share the property that in the worst-case they are of quadratic order with
respect to the maximum absolute valued entry of $(a_1,...,a_n)$. Assuming that $a_1\leq a_2\leq ...\leq a_n$ holds, such bounds include the
classical bound by Erd\H{o}s and Graham \cite{6}
\[
g(a_1,...,a_n)\leq  2a_{n-1}\lfloor\frac{a_n}{n}\rfloor - a_n,
\]
by Selmer \cite{12}
\[
g(a_1,...,a_n)\leq  2a_{n}\lfloor\frac{a_1}{n}\rfloor - a_1,
\]
by Vitek \cite{16}
\[
g(a_1,...,a_n)\leq \frac{1}{2} (a_2-1)(a_n-2)-1,
\]
and by Beck et al. \cite{2}
\[
g(a_1,...,a_n)\leq \frac{1}{2} \big(\sqrt{a_1a_2a_3(a_1+a_2+a_3)}-a_1-a_2-a_3\big).
\]

In Section 2, we present a new algorithm to compute the Frobenius number. Also we  present the sequential form of the new algorithm  in Section 3.

\section{New algorithm}
In this section, we present a new algorithm to calculate the
Frobenius number. 
We start this section with the following easy theorem: 
\begin{theorem}\label{new}
For the numbers    $\alpha_1<\alpha_2<...<\alpha_n$, we have 
	\[
	g(\alpha_1,...,\alpha_n)\leq g(\alpha_1,...,\alpha_{n-1})\leq...\leq g(\alpha_1,\alpha_2).
	\]
\end{theorem}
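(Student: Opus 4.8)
The plan is to prove the chain one link at a time: it suffices to show that
\[
g(\alpha_1,\dots,\alpha_k)\;\le\;g(\alpha_1,\dots,\alpha_{k-1})
\qquad\text{for every }k\text{ with }2<k\le n,
\]
since concatenating these $n-2$ inequalities yields the displayed chain. So fix such a $k$.

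For a fixed subset of generators, write $R_k$ for the set of non-negative integers that \emph{have} a representation with respect to $\alpha_1,\dots,\alpha_k$, and $N_k=\mathbb{N}\setminus R_k$ for the set of non-representable numbers, so that $g(\alpha_1,\dots,\alpha_k)=\max N_k$ (and similarly with $k-1$). The only observation needed is the inclusion $R_{k-1}\subseteq R_k$: if $\alpha=\lambda_1\alpha_1+\dots+\lambda_{k-1}\alpha_{k-1}$ with all $\lambda_i\ge 0$, then $\alpha=\lambda_1\alpha_1+\dots+\lambda_{k-1}\alpha_{k-1}+0\cdot\alpha_k$ exhibits a representation of $\alpha$ with respect to $\alpha_1,\dots,\alpha_k$. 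Passing to complements gives $N_k\subseteq N_{k-1}$, and the maximum of a subset cannot exceed the maximum of the set containing it, so $\max N_k\le\max N_{k-1}$, which is exactly $g(\alpha_1,\dots,\alpha_k)\le g(\alpha_1,\dots,\alpha_{k-1})$.

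There is essentially no hard step here; the only point requiring a word of care is the case in which $\gcd(\alpha_1,\dots,\alpha_{k-1})>1$ while $\gcd(\alpha_1,\dots,\alpha_k)=1$. Then $N_{k-1}$ is infinite, so $g(\alpha_1,\dots,\alpha_{k-1})$ is read as $+\infty$ and the inequality holds vacuously; the argument above is otherwise unchanged. I would also remark that the hypothesis $\alpha_1<\alpha_2<\dots<\alpha_n$ plays no role — only the nesting of the prefix lists $\{\alpha_1,\dots,\alpha_{k-1}\}\subseteq\{\alpha_1,\dots,\alpha_k\}$ is used — so the statement in fact holds for any ordering of the generators, and more generally $g$ is monotone non-increasing under enlarging the generating set.
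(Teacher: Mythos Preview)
Your proof is correct and uses essentially the same idea as the paper's: a representation with respect to a subset of the generators is automatically a representation with respect to the full set (by padding with zero coefficients), so the set of non-representable integers can only shrink as generators are added. Your version is somewhat more explicit than the paper's---you prove each link $g(\alpha_1,\dots,\alpha_k)\le g(\alpha_1,\dots,\alpha_{k-1})$ separately and discuss the $\gcd$ edge case---but the underlying argument is identical.
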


\begin{proof}
	By the definition of Frobenius number,  all integers strictly greater than 
	$g(\alpha_1,\alpha_{2})=\alpha_1\alpha_2-\alpha_1-\alpha_2$ 
	can be expressed as $\alpha_ix_i + \alpha_j x_j$ for some $x_i, x_j \in \mathbb{Z}^+$. So it follows that all integers strictly
	greater than $\alpha_1\alpha_2-\alpha_1-\alpha_2$ can be expressed as $\sum_{k=1}^n a_kx^k$ for nonnegative integer $x_k$ where $ k \in \{1, 2,...,n\}$. Therefore, we have the result. \qed
		\end{proof}

Using the upper bound of Theorem \ref{new}, we present a new
algorithm for calculating Frobenius numbers.
More precisely, since $g(\alpha_1,...,\alpha_n)\leq
g(\alpha_1,\alpha_{2})=\alpha_1\alpha_2-\alpha_1-\alpha_2,$ we
compute the number $\alpha_1\alpha_2-\alpha_1-\alpha_2$
and using our sub-algorithm which we call it {\it HasRep Algorithm} examine the natural
numbers less than $g(\alpha_1,\alpha_{2})$ are representable
respect to $\alpha_1,...,\alpha_n$ or not. Obviously, the largest number
less than $\alpha_1\alpha_2-\alpha_1-\alpha_2$ which dose not
have a representation, is the Frobenius number of
$\alpha_1,...,\alpha_n$. 
After this, by the Algorithm 2.\ref{2} which we call it {\it  Frob Algorithm}, we
apply the Algorithm  2.\ref{1} to compute the Frobenius number
of $\alpha_1,...,\alpha_n$.

\begin{algorithm}[htb]
		\scriptsize
	\label{2}	
	%\SetAlgoLined	
	\SetKwInOut{Input}{input}
	\Input{ L, the list of numbers.}
	\SetKwInOut{Output}{output} 
	\Output{The Frobenius number of L}
%\hspace*{\algorithmicindent} \textbf{Input:}  L, the list of numbers\;
%\hspace*{\algorithmicindent} \textbf{Output:} The Frobenius number of L\;
% \KwData{this text}
% \KwResult{how to write algorithm with \LaTeX2e }

$F:=L[1]*L[2]-(L[1]+L[2])$;

Flag:=true;

\textbf{for} $a$ \textbf{from} $F$ \textbf{by} $-1$ \textbf{to} $L[1]+1$ \textbf{while} flag \textbf{do}

  $Flag:=HasRep(a,L)$;

\textbf{end for}

return(a+1);

End.

 \caption{Frob Algorithm}
\end{algorithm}

We have implemented the new algorithm in the Maple software.
We briefly show the results of the implementation of this
algorithm for a few examples in the Table \ref{table1}.  It should be note that the designed
algorithm, unlike some algorithms, is responsible for any number
of numbers.

\begin{algorithm}[H]
		\scriptsize
		\label{1}	
		%\SetAlgoLined	
		\SetKwInOut{Input}{input}
		\Input{$a,L$: $a$ number and $L$ list of numbers.}
		\SetKwInOut{Output}{output} 
		\Output{ture, if $a$ has a representation with respect to $L$ and false, otherwise.}

% \KwData{this text}
% \KwResult{how to write algorithm with \LaTeX2e }

\textbf{if} $\# L = 2$ \textbf{then}
	 
	 flag:= false;
	 
	 \textbf{if} $a~ mod ~L[2]=0$ \textbf{then} 
	
	 return(true) ; 
	
	 \textbf{elif} $a=L[1]*L[2]-L[1]-L[2]$ \textbf{then}
	 
	 return(false);
	 
 \textbf{fi;} 
	 
	 \textbf{while} $a\geq L[2]$ \textbf{and not flag do}
	 
	 \textbf{if} $a~ mode~ L [1] \neq 0$ \textbf{then}
	 
	 $a: = a –L[2]$;
	 
	 \textbf{else} 
	 
	 return(true); 
	 
	 \textbf{fi;} 
	 
	\textbf{end while;}
	 
	 return(false); 
	 
	 \textbf{else}
	
	 \textbf{if} $a ~ mode ~L[-1]= 0$ or $HasRep(a, L[1..-2])$  \textbf{then}
	
	 return(true); 
	
	 \textbf{else}
	
	 flag:= false;    
	
	 \textbf{while} $a \geq L[-1]$ \textbf{and not flag do} 
	 
	 $a:=a-L[-1];$  
	
 	 $flag:=HasRep(a,L[1..-2])$;        
	
	 \textbf{end while;}
	
  \textbf{fi;}
	
 	 return (flag) ;
	
	 \textbf{fi\;}
End.
 \caption{HasRep}
\end{algorithm}

\begin{table}[htb]
\begin{center}
    \begin{tabular}[t]{|p{10.2cm}|c|c|c|}
\hline
    Numbers & Frobenius Number \\ \hline
    7,11,13 & 30    \\ \hline
    53,71,91 & 899\\ \hline
	322, 654, 765 & 27971  \\ \hline
 	123,1234,12345	&71459	\\ \hline 
	151, 157, 251, 711& 	3019 \\ \hline
	151, 157, 251, 711, 912 &	3019 \\ \hline
101,109,113,119,121,131,139,149,151,161,163,167,169,187,191, 214,219,238,276,324,345,346,349,387,421,427,444,453,463,525, 530,555,579,580,625,711,719,737,752,787,814,834,856,878,899, 915,937,978,989 &426 \\

    \hline      
    \end{tabular} 
    \caption{The results of the implementation of proposed 
    	algorithm for a few examples }\label{table1}
  \end{center}
\end{table}

\section{The sequential form of algorithm }

It is interesting that the proposed algorithm can convert to  the sequential form, that we do it in this section.
Since the HasRep Algorithm gives YES or NO, we use a function to give us $0$ and  $1$. We define the function $f$ as follows:

\[
f(\alpha_1,R) := \lfloor\frac{\alpha_1}{R}  -  \frac{1}{\lfloor\frac{R}{\alpha_1} \rfloor} \rfloor.
 \]
Note that if $R$ is divisible by $\alpha_1$, then the result of the function $f(\alpha_1,R) $ is zero. Otherwise, the result of the expression is between $-1$ and $0$. 
Now we define the function $H$ as follows:

\[
H(R,[\alpha_1, \alpha_2]) :=  \lfloor\frac{\alpha_1}{R}  -  \frac{1}{\lfloor\frac{R}{\alpha_1} \rfloor} \rfloor   \lfloor\frac{\alpha_2}{R}  -  \frac{2}{\lfloor\frac{R}{\alpha_2} \rfloor} \rfloor  \lfloor\frac{\alpha_1}{R-\alpha_2}  -  \frac{1}{\lfloor\frac{R-\alpha_2}{\alpha_1} \rfloor} \rfloor \lfloor\frac{\alpha_1}{R-2\alpha_2}  -  \frac{1}{\lfloor\frac{R-2\alpha_2}{\alpha_1} \rfloor} \rfloor ... \]
\[  \lfloor\frac{\alpha_1}{R-(\lfloor\frac{R}{\alpha_2} \rfloor -1)  \alpha_2}  -  \frac{1}{\lfloor\frac{R-(\lfloor\frac{R}{\alpha_2} \rfloor -1)  \alpha_2}{\alpha_1} \rfloor} \rfloor  {\lfloor\frac{R-\lfloor\frac{R}{\alpha_2} \rfloor  \alpha_2}{\alpha_1} \rfloor \alpha_1} - {R+\lfloor\frac{R}{\alpha_2} \rfloor  \alpha_2}
\]

The function $H$ has two variables  (two input) which investigate the linear representation of $R$ with respect to a list. It is obvious that if $R$ has a linear representation 
with respect to $L$, then the value of the function $H$ is $0$, otherwise is a number in $(-1,0)$ or in $(0,1)$. Now we define another function which we denote it by $N$ (it inverts the answer of the $H$ function) as follows:  

\begin{equation*}
	N(x)=
	\begin{cases}
		1 & \text{if } x=0\\
		0 & \text{if } \{x | -1 \leq x \leq 1 ; x\neq0\}
	\end{cases}
\end{equation*}
Note that 
\[ 
 N(x) := \lfloor{-|x|} \rfloor +1.
  \]

Based the Frob Algorithm, we first find the upper bound $U$ and then enter the numbers from $1$ to $U$ into the function $N(H(U,[\alpha_1,\alpha_2]))$. 
The numbers that remain have no representation with respect to the list. But the Frobenius number is the largest among them, so we can calculate it. 
If 
$ \delta_i := N(H(i,[\alpha_1,\alpha_2]))$,  
then the following theorem gives the Frobenius number $g(\alpha_1,...,\alpha_n)$: 
%\[ \lambda_i := N(H(i,[\alpha_1,\alpha_2])) ;\] 

\begin{theorem}
	The Frobenius number $g(\alpha_1,...,\alpha_n)$ based on the values of $\delta_i$ and function $N(\delta_i)$ is equal to
\[ g(\alpha_1,...,\alpha_n):= (U)(\delta_U) + (U-1)\times(\delta_{U-1})N(\delta_U) +\cdots+ (1)(\delta_1)N(\delta_U)...N(\delta_2)  \]
\end{theorem}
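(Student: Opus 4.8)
The plan is to argue that the stated formula is simply an arithmetic encoding of the selection rule ``pick the largest index $i$ with $\delta_i = 1$,'' and that this largest index is exactly the Frobenius number. First I would establish the bridge between the $\delta_i$ and representability: invoking the discussion of the functions $f$, $H$, and $N$ (and ultimately Theorem~\ref{new}, which guarantees $U := \alpha_1\alpha_2 - \alpha_1 - \alpha_2$ is a valid upper bound, so that $g(\alpha_1,\dots,\alpha_n) \le U$), I would record the key fact that $\delta_i = 1$ precisely when $i$ has \emph{no} representation with respect to $\alpha_1,\dots,\alpha_n$, and $\delta_i = 0$ otherwise. Consequently $g(\alpha_1,\dots,\alpha_n) = \max\{\, i \in \{1,\dots,U\} : \delta_i = 1 \,\}$, and such an $i$ exists (the set is nonempty whenever $\gcd = 1$ and $n\ge 2$ with the $\alpha_k$ not all equal to $1$).

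Next I would analyze the right-hand side term by term. Write $m := g(\alpha_1,\dots,\alpha_n)$, so $\delta_U = \delta_{U-1} = \cdots = \delta_{m+1} = 0$ and $\delta_m = 1$. The $j$-th summand is $j\,\delta_j \prod_{k=j+1}^{U} N(\delta_k)$, and I would compute the coefficient $\prod_{k=j+1}^{U} N(\delta_k)$ case by case. For $j > m$: the factor $\delta_j = 0$ kills the term. For $j = m$: every $\delta_k$ with $k > m$ is $0$, so $N(\delta_k) = 1$ by definition of $N$, hence the product of $N$'s is $1$ and the term contributes $m \cdot 1 \cdot 1 = m$. For $j < m$: the product $\prod_{k=j+1}^{U} N(\delta_k)$ includes the factor $N(\delta_m) = N(1) = 0$, so every such term vanishes. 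Summing, the right-hand side collapses to exactly $m = g(\alpha_1,\dots,\alpha_n)$, which is what we want.

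The main obstacle I anticipate is not the telescoping bookkeeping above — that is routine once the $\delta_i$'s are understood — but rather pinning down rigorously the claim $\delta_i \in \{0,1\}$ with the stated meaning. The function $H$ is a long product of floor-expressions of the form $\lfloor \frac{\alpha_1}{R - t\alpha_2} - \frac{1}{\lfloor (R-t\alpha_2)/\alpha_1 \rfloor} \rfloor$ together with a final affine correction term, and one must verify that (i) each factor lies in $(-1,0]$, being $0$ exactly when the corresponding quantity is divisible by $\alpha_1$, so that a product equal to zero certifies a genuine representation $R = x_1\alpha_1 + x_2\alpha_2$; (ii) the trailing term $\lfloor (R - \lfloor R/\alpha_2\rfloor \alpha_2)/\alpha_1 \rfloor \alpha_1 - R + \lfloor R/\alpha_2 \rfloor \alpha_2$ correctly handles the remaining residue so that $H(R,[\alpha_1,\alpha_2]) = 0$ iff $R$ is representable; and (iii) $N$ indeed maps $0 \mapsto 1$ and any nonzero value in $[-1,1]$ to $0$ via $\lfloor -|x| \rfloor + 1$. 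I would handle (i)–(iii) by a short lemma mirroring the correctness of the \emph{HasRep} recursion, treating the two-element base case explicitly and noting that $H$ is the unrolled form of that recursion for a list $[\alpha_1,\alpha_2]$; given earlier parts of the paper, I would cite the \emph{HasRep} construction and keep this verification brief, then apply the telescoping argument to conclude.
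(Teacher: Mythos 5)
The paper states this theorem without any proof at all, so there is nothing on the paper's side to compare against; judged on its own merits, your telescoping computation (the $j$-th term survives only when $\delta_j=1$ and $N(\delta_k)=1$ for every $k>j$, so the sum collapses to the largest index whose indicator equals $1$) is the right skeleton and is surely the intended reading of the formula. The genuine problem is at the bridge step, and as written your proposal is internally inconsistent there. You take as ``the key fact'' that $\delta_i=1$ exactly when $i$ has \emph{no} representation with respect to $\alpha_1,\dots,\alpha_n$ --- which is indeed what the formula needs. But the paper defines $\delta_i=N(H(i,[\alpha_1,\alpha_2]))$ with $H(R,L)=0$ precisely when $R$ \emph{is} representable, and $N(0)=1$, $N(x)=0$ for $x\neq 0$; your own verification plan (i)--(iii) proves exactly these two facts, and together they give $\delta_i=1$ iff $i$ \emph{is} representable, the opposite of your key fact. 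Under that literal reading the right-hand side is not the Frobenius number: for $\alpha_1=3$, $\alpha_2=5$ one has $U=7$, $\delta_7=0$, $\delta_6=1$, and the sum evaluates to $6$, not $g(3,5)=7$. So you must either insert an explicit negation (work with $1-N(H(i,\cdot))$, or argue that the paper's ``inverting'' function is meant to indicate non-representability) or swap the roles of $\delta_k$ and $N(\delta_k)$ in the telescoping; you cannot keep both your bridge lemma and your main computation as stated.

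A second gap: the paper's $\delta_i$ is built from $H(i,[\alpha_1,\alpha_2])$ only, i.e.\ it tests representability with respect to the first two generators, whereas the theorem claims to output $g(\alpha_1,\dots,\alpha_n)$. Your argument silently upgrades $\delta_i$ to an indicator for the full list; to justify this you need the generalized $H$ from the closing remark (equivalently, correctness of the HasRep recursion on the whole list), not just the two-generator case you sketch. Finally, a minor point in your item (i): not every constituent of $H$ lies in $(-1,0]$ --- the trailing term equals minus the residue of $R-\lfloor R/\alpha_2\rfloor\alpha_2$ modulo $\alpha_1$, whose absolute value can exceed $1$ --- but since only the zero/nonzero dichotomy is fed into $N$, this does not damage the argument once the sign issue above is repaired.
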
 

\medskip

We close the paper by the following remark:  

\begin{remark} 
	The   function $H$ can be generalized for  $n$ numbers, i.e., if  
$ \delta_i := N(H(i,[\alpha_1,...,\alpha_n])),$, then  
\[
  H(R,[\alpha_1, \alpha_2])\times H(R,[\alpha_1, \alpha_2,\alpha_3])\times...\times H(R,[\alpha_1,..., \alpha_{n-1}])\times\lfloor { \frac{\alpha_n}{R} - \frac{1}{ \lfloor { \frac{R}{\alpha_n} } \rfloor } }  \rfloor \times 
 \] 
\[  \prod_{i=1}^{\lfloor { \frac{R}{\alpha_n} } \rfloor}H(R-i\alpha_n,[\alpha_1,...,\alpha_{n-1}])  \]

\end{remark}

%==================================================

%==================================================

\end{document}